\newtheorem{theorem}{Theorem}
\newtheorem{lemma}[theorem]{Lemma}
\newtheorem{cor}[theorem]{Corollary}
\newtheorem{prop}[theorem]{Proposition}
\theoremstyle{definition}
\def\min{\mathop{\mathrm{min}}}
\def\RR{\mathbb R}
\def\PP{\mathbb P}
\def\K{{\bf K}}
\def\cal{\mathcal }
 \def\ord{\text{ord}}
\def\p{\mathbf p}
\def\dkh{the assumption ($\mathcal{H}$)}
\def\noi{{\noindent}}
\def\gen{\mathfrak g}
\begin{document}
\title[Heights of Function Field Points]{Heights of Function Field Points on Curves Given by Equations
with Separated Variables}
\author{Ta Thi Hoai An}
\address{Institute of Mathematics, 18 Hoang Quoc Viet, Cau Giay\\ 10307   Hanoi  \\
Viet Nam}
\email{tthan@math.ac.vn}
\author{Nguyen Thi Ngoc Diep}
\address{Department of Mathematics\\Vinh University\\Vietnam} \email{ngocdiepdhv@gmail.com}
\begin{abstract} Let $P$ and $Q$ be polynomials
in one variable over an algebraically closed field $k$ of
characteristic zero. Let $f$ and $g$ be elements of a function field
$\K$ over $k$ such that $P(f)=Q(g).$ We give conditions on $P$ and $Q$
such that the height of $f$ and $g$ can be effectively bounded, and moreover,
we give sufficient conditions on $P$ and $Q$ under which $f$ and $g$ must
be constant.
\end{abstract}

\thanks{Financial support provided to the first named author
by Alexander von Humboldt Foundation and  ICTP and to  the both authors
by  Vietnam's National Foundation for Science and Technology Development (NAFOSTED)}
\keywords{Heights of Functions, Uniqueness polynomial, Functional equation}
\thanks{2000\ {\it Mathematics Subject Classification.} Primary 14H05
Secondary 30D35 14H55.}

\maketitle


\section{Introduction }

Let $k$ be an algebraically closed field of characteristic zero, and
let $P$ and $Q$ be polynomials in $k[X].$ Determining when
algebraic curves defined by equations of the form $P(X)-Q(Y)=0$
have irreducible components of geometric genus zero or one is or interest
both in arithmetic and complex function theory because it is related
to when such curves may have infinitely many rational solutions in a number
field or non-constant meromorphic function solutions. 
Genus zero components are also related to the study of what
are known as ``uniqueness polynomials.'' Special cases
of this problem have been considered, but to date there has been no
complete characterization of when an algebraic curve of the
form $P(x)-Q(y)=0$ has no irreducible components of genus at most one. For example,  in \cite{AE},  \cite{AWW} and  \cite{AWW2},  authors considered the problem for cases of genus zero over non-archimedean field. 
 Fujimoto in \cite{Fuj} gave some  sufficient conditions for the problem over complex number field under  assumption $Q=cP$ for some constant $c$  and  the polynomial   $P$ satisfies \textit{Hypothesis I,} that  is  $P$
is injective on the set of distinct zeros of $P'.$

Let  $C$ be a smooth curve of genus $\gen$ over $k,$
and let $\K$ be its function
field. Let
$F_1(X,Y,Z)$ be the homogenization of $[P(X)-P(Y)]/[X-Y]$ and
$F_c(X,Y,Z)$, $c\ne 0,1$, be the homogenization of
$P(X)-cP(Y)$. If $f$ and $g$ are algebraic functions in  $\K$
such that $P(f)=cP(g)$ for some nonzero constant $c$,
then the morphism
$\Phi:=(f,g,1)$ is a morphism from $C$ into  the curve $[F_c(X,Y,Z)=0]$.
By the Hurwitz theorem this cannot happen if the curves $[F_c(X,Y,Z)=0]$
have no components of genus $\le \gen$.  
In \cite{AW1}, An and Wang gave sufficient conditions on a polynomial
$P$ satisfying Hypothesis~I such that the height of any solution
$(f,g)$ with $f,g\in\K$ such that $P(f)=cP(g)$ could be effectively
bounded above.
The purpose of this paper is to consider more general separated
variable equations of the form $P(x)-Q(y)$ and to give
some conditions on the polynomials $P$ and $Q$ such that if $f$ and $g$ are
elements of $\K$ satisfying the equation $P(f)=Q(g),$ then the heights of $f$
and $g$ can be effectively bounded.

For each point $\p\in C$, we may choose a uniformizer $t_\p$ to define a
normalized order function $$v_\p:= \ord_\p:\K\to \RR\cup\{\infty\}$$ at
 $\p$. For a non-zero element $f\in \K$, the height $h(f)$ counts
the number of poles of $f$ with multiplicities, i.e.
 $$h(f):=\displaystyle\sum_{\p\in C} -\min\{0,\,v_\p(f)\}.$$

 For $[f,g]\in\PP^1(\K)$,
 its height is defined by $$h(f,g) := \displaystyle\sum_{\p\in C} -\min\{v_\p(f),v_\p(g)\}.$$
 Clearly, $h(f) = h(f,1)$.

 From now we will let $P(X)$ and $Q(X)$ be nonconstant polynomials of degree
 $n$ and $m$, respectively, in $k[X]$. Without loss of generality,
 throughout the paper we will assume that $n\geq m$. We will  denote  by $\alpha_1, \alpha_2, ..., \alpha_l$ and
 $\beta_1, \beta_2, ..., \beta_h$ the distinct roots of $P'(X)$ and $Q'(X)$, respectively. We will use $p_1, p_2, ..., p_l$
 and $q_1, q_2, ..., q_h$ to denote the multiplicities of the roots in
 $P'(X)$ and $Q'(X)$, respectively. Thus, for some $a$, $b$ in $k$,
 $$P'(X)=a{(X-\alpha_1)}^{p_1}{(X-\alpha_2)}^{p_2}...{(X-\alpha_l)}^{p_l}$$
 $$Q'(X)=b{(X-\beta_1)}^{q_1}{(X-\beta_2)}^{q_2}...{(X-\beta_h)}^{q_h}.$$

 Recall, the polynomial $P(X)$  satisfies \textit{Hypothesis I} if
 $$P(\alpha_i) \neq P(\alpha_j)\ {\rm whenever}\ i\neq j,$$
 or in other words $P$ is injective on the roots of $P'$.

If one of the polynomials $P$ or $Q$ is linear, say $P(X)=aX+b,$  then  $( \frac 1a Q(f)-b,f)$ is a solution of the equation $P(X)=Q(Y),$ where   $f$ is some nonconstant element of $\K$. Hence, from now we always assume that both $P$ and $Q$ are not linear polynomials.
 The main results are as follows.
 \begin{theorem}\label{th1} Suppose that $f$ and $g$ are two distinct
 non-constant rational functions in $\K$ such that $P(f) = Q(g)$. Let
 \begin{align*}&B_0=\{i\mid 1\leq i\leq l, \ P(\alpha_i)\ne
Q(\beta_j)\text{   for all $ j=1,...,h\}$\quad and }\\
&B_1=\{i\mid 1\leq i\leq h, \ Q(\beta_i)\ne P(\alpha_j) \text{  for
all  $j=1,...,l\}$}.\end{align*} 
Then
\begin{enumerate}\item[(a)]$n h(f)={m}h(g);$
\item[(b)]$\Big({\displaystyle\sum_{i\in B_0}p_i}
-\frac{m+n}{m}\Big) h(f)\leq{2\gen-2}; $
\item[(c)]$\Big(\displaystyle\sum_{i\in B_1}q_i  -\frac{2m}{n}\Big) \ h(g)\leq 2\gen-2.$
\end{enumerate}
\end{theorem}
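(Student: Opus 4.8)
The plan is to translate the relation $P(f)=Q(g)$ into the geometry of the morphisms it induces and then apply Riemann--Hurwitz twice. Since $f$ and $g$ are nonconstant they define morphisms $f,g\colon C\to\PP^1$ of degrees $h(f)$ and $h(g)$, and $\phi:=P(f)=Q(g)$ is a morphism $C\to\PP^1$ that factors as $\phi=P\circ f=Q\circ g$, where $P$ and $Q$ are regarded as the self-maps of $\PP^1$ of degrees $n$ and $m$. For such a polynomial map, $\infty$ is totally ramified and the ramified finite points are exactly the roots $\alpha_i$ of $P'$, with ramification index $p_i+1$ at $\alpha_i$; hence the branch locus of $P$ is $\{\infty\}\cup\{P(\alpha_i):1\le i\le l\}$, and likewise for $Q$. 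The two tools will be the multiplicativity of ramification indices along $\phi=P\circ f=Q\circ g$, namely $e_\p(\phi)=e_\p(f)\,e_{f(\p)}(P)=e_\p(g)\,e_{g(\p)}(Q)$ for every $\p\in C$, and the Riemann--Hurwitz formula applied to $f$ and to $g$.

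For part (a), I would first observe that $f$ and $g$ have the same poles: because $P$ and $Q$ are nonconstant polynomials, $v_\p(P(f))<0\iff v_\p(f)<0$ and $v_\p(Q(g))<0\iff v_\p(g)<0$. At a common pole $\p$ one has $n\,v_\p(f)=v_\p(\phi)=m\,v_\p(g)$; summing $-\min\{0,v_\p\}$ over $C$ yields $n\,h(f)=m\,h(g)$.

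For part (c), apply Riemann--Hurwitz to $f$: $2\gen-2=-2h(f)+\sum_{\p\in C}(e_\p(f)-1)$. Fix $i\in B_1$. Since $Q(\beta_i)$ is a finite value not of the form $P(\alpha_j)$, it is not in the branch locus of $P$, so for any $\p$ with $g(\p)=\beta_i$ the point $f(\p)$ lies over $Q(\beta_i)$ and is unramified for $P$, whence $e_\p(f)=e_\p(\phi)=(q_i+1)\,e_\p(g)$. Summing over the fiber $g^{-1}(\beta_i)$ and using $\sum_{g(\p)=\beta_i}e_\p(g)=\deg g=h(g)$ together with $\#g^{-1}(\beta_i)\le h(g)$ gives $\sum_{g(\p)=\beta_i}(e_\p(f)-1)=(q_i+1)h(g)-\#g^{-1}(\beta_i)\ge q_i\,h(g)$. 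The fibers $g^{-1}(\beta_i)$ for distinct $i$ are disjoint and the remaining terms $e_\p(f)-1$ are nonnegative, so $\sum_\p(e_\p(f)-1)\ge\big(\sum_{i\in B_1}q_i\big)h(g)$; combining this with Riemann--Hurwitz and then with (a) (which gives $2h(f)=\tfrac{2m}{n}h(g)$) is exactly (c).

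Part (b) runs the same way with the roles of $P,f$ and $Q,g$ exchanged: apply Riemann--Hurwitz to $g$, and for $i\in B_0$ note that $P(\alpha_i)$ is not in the branch locus of $Q$, forcing $e_\p(g)=(p_i+1)e_\p(f)$ whenever $f(\p)=\alpha_i$, hence $\sum_{f(\p)=\alpha_i}(e_\p(g)-1)\ge p_i\,h(f)$. However, this naive version only yields the constant $\tfrac{2n}{m}$, which is too weak since $n\ge m$. The point I expect to be the crux is that one must also harvest the ramification of $g$ at the $N$ common poles of $f$ and $g$: there $\sum_{\p\ \text{a pole}}(e_\p(g)-1)=h(g)-N\ge h(g)-h(f)$, because $N\le h(f)$, and these poles are disjoint from the fibers $f^{-1}(\alpha_i)$, $i\in B_0$. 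Adding this contribution gives $\sum_\p(e_\p(g)-1)\ge\big(\sum_{i\in B_0}p_i\big)h(f)+h(g)-h(f)$; feeding it into Riemann--Hurwitz for $g$ and using $h(f)+h(g)=\tfrac{m+n}{m}h(f)$ from (a) produces the sharp constant $\tfrac{m+n}{m}$ in (b). Apart from this observation all steps are routine fiber-counting, so the main obstacle is really bookkeeping: correctly identifying which points are forced to ramify and checking that the families of points being summed are pairwise disjoint.
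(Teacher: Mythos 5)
Your proof is correct, and it reaches all three inequalities by a genuinely different route from the paper's. You apply Riemann--Hurwitz directly to the morphisms $f,g\colon C\to\PP^1$ (of degrees $h(f)$, $h(g)$) and harvest ramification fiber by fiber through the chain rule $e_\p(\phi)=e_\p(f)\,e_{f(\p)}(P)=e_\p(g)\,e_{g(\p)}(Q)$; the paper instead works with the height of the pair $(P'(f),Q'(g))$, proving the upper bound $h(P'(f),Q'(g))\le\frac{m+n}{m}h(f)+2\gen-2$ from the identity $d_tf\,P'(f)=d_tg\,Q'(g)$ and the sum formula $\sum_{\p}v_\p(d_\p t)=2\gen-2$ (Lemma~\ref{lmCT}), together with a matching lower bound obtained by dividing out the auxiliary polynomial $G(f)=\prod_{i\notin B_0}(f-\alpha_i)^{p_i}$ (resp. $G_1(g)$) which cancels precisely those zeros of $P'(f)$ that could be shared with $Q'(g)$. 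The two arguments are the same ramification count in different clothing: your key observation in (b) --- that the pole fiber of $g$ contributes $h(g)-N\ge h(g)-h(f)$ and upgrades the naive constant $\frac{2n}{m}$ to $\frac{m+n}{m}$ --- is exactly the paper's step $\#\{\p\mid v_\p(f)<0\}\le h(f)$ inside Lemma~\ref{lmCT}, and your use of $\#g^{-1}(\beta_i)\le h(g)$ plays the role of the paper's verification that $\min\{v_\p(P'(f)/G),v_\p(Q'(g)/G)\}\le 0$ at finite points. Your version is arguably cleaner and more transparent for Theorem~\ref{th1}, since each constant is visibly a fiber count; what the paper's formulation buys is reusability, because the retained term $\sum_\p\min\{v_\p^0(d_\p f),v_\p^0(d_\p g)\}$ and the freedom to replace $G$ by more delicate cancelling functions (the $L_{i_1,i_2}$) are needed later for Theorems~\ref{th2} and~\ref{th3}, where counting over single unramified branch points no longer suffices.
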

As a consequence of Theorem \ref{th1} we have the following corollary.

\begin{cor}With the same hypothesis as in Theorem~\ref{th1}.
\begin{enumerate}\item[(a)] If either ${\displaystyle\sum_{i\in B_0}p_i}
-\frac{m+n}{m}>0$ or $\displaystyle\sum_{i\in B_1}q_i  -\frac{2m}{n}>0$,
then the heights of $f$ and $g$ are effectively bounded above;
\item[(b)] If either ${\displaystyle\sum_{i\in B_0}p_i}
-\frac{m+n}{m}>\max\{0,2\gen-2\}$ or $\displaystyle\sum_{i\in B_1}q_i  -\frac{2m}{n}>\max\{0,2\gen-2\}$, then
$f$ and $g$ are constant.
\end{enumerate}
\end{cor}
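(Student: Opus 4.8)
The plan is to deduce the corollary directly from the three relations (a)--(c) of Theorem~\ref{th1}, using only two elementary facts: that every height is non-negative, and that a non-constant element of $\K$ has at least one pole, so that its height is at least $1$.

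For part~(a), set $C_0:=\sum_{i\in B_0}p_i-\frac{m+n}{m}$ and suppose $C_0>0$. Relation~(b) reads $C_0\,h(f)\le 2\gen-2$, whence
\[
h(f)\le\frac{2\gen-2}{C_0}.
\]
The right-hand side is determined by $P$, $Q$ and $\gen$, so this is an effective upper bound for $h(f)$; combining it with the proportionality $h(g)=\frac{n}{m}h(f)$ coming from relation~(a) yields an effective bound for $h(g)$ as well. If instead $\sum_{i\in B_1}q_i-\frac{2m}{n}>0$, the roles are reversed: relation~(c) bounds $h(g)$ effectively and relation~(a) then bounds $h(f)$.

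For part~(b), suppose $C_0:=\sum_{i\in B_0}p_i-\frac{m+n}{m}>\max\{0,2\gen-2\}$ (the case of the $B_1$-sum is identical, using relation~(c)). Assume for contradiction that $f$ is non-constant. Then $h(f)\ge 1$, and since $C_0>0$ we obtain $C_0\,h(f)\ge C_0>2\gen-2$, contradicting the inequality $C_0\,h(f)\le 2\gen-2$ of relation~(b). Hence $f$ must be constant, say $f=\alpha\in k$. Then $Q(g)=P(f)=P(\alpha)$ is a constant, so $g$ is a root of $Q(X)-P(\alpha)$, a non-constant polynomial (as $Q$ is non-constant) with coefficients in $k$; since $k$ is algebraically closed, $g\in k$ is constant as well.

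This deduction is essentially bookkeeping once Theorem~\ref{th1} is available, so I do not expect a genuine obstacle. The two points that require attention are the sign of $2\gen-2$ in the low-genus cases $\gen\in\{0,1\}$ (where the upper bound in part~(a) becomes non-positive and, together with $h(f)\ge 0$, already forces $f$, and hence $g$, to be constant), and the fact that a bound on one of $f,g$ must be transferred to the other through the exact proportionality $n\,h(f)=m\,h(g)$ of relation~(a) rather than through any cruder estimate.
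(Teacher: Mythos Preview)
Your derivation is correct and is exactly the intended one: the paper states this corollary without proof, treating it as an immediate consequence of Theorem~\ref{th1}, and your argument spells out precisely that routine deduction (divide by the positive coefficient for~(a); use $h(f)\ge 1$ for non-constant $f$ to contradict relation~(b) or~(c) for part~(b), then transfer via $n\,h(f)=m\,h(g)$). There is nothing to compare against beyond this.
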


In order to state  Theorem  \ref{th2} clearly,
we need to introduce the following notation:

\medskip

\noi {\bf Notation.} \quad We put:
\begin{align*}&A_0:=\{(i, j)\mid 1\leq i\leq l, 1\leq j\leq h, \
P(\alpha_i)=Q(\beta_{j})\},\\
&A_1:=\{(i, j)\mid (i, j)\in A_0,\  p_i>q_j\},\\
&A_2:=\{(i, j)\mid (i, j)\in A_0 ,\
p_i<q_j\}.
\end{align*}

\begin{theorem}\label{th2} Let $P(X)$ and $Q(X)$ satisfy Hypothesis I. Suppose that $f$ and $g$ are two distinct
 non-constant rational functions in $\K$ such that $P(f) = Q(g)$. Then
$\Big(\Big(\displaystyle\sum_{(i, j)\in A_1}p_i-\frac{n}{m}q_j\Big)+
\displaystyle\Big(\sum_{1\leq i\leq l,(i, j)\notin A_0}p_i\Big)-\frac{m+n}{m}\Big) \ h(f)\leq
2\gen-2.$
\end{theorem}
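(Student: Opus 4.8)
The plan is to follow the same strategy that underlies Theorem~\ref{th1}, namely to apply the Riemann--Hurwitz formula to the morphism $\Phi=(f,g,1)\colon C\to \PP^1$ (or rather to analyze ramification of $f$ and $g$ directly), but now exploiting the stronger ramification information that comes from Hypothesis~I together with the pairing $A_0,A_1,A_2$ of critical values. First I would recall the basic ramification count: for the map $f\colon C\to\PP^1$ of degree $h(f)$, the Riemann--Hurwitz formula gives
\begin{equation*}
2\gen-2 = -2h(f) + \sum_{\p\in C}\bigl(e_\p(f)-1\bigr),
\end{equation*}
and similarly for $g$ with degree $h(g)$; by part (a) of Theorem~\ref{th1} we have $n\,h(f)=m\,h(g)$, so everything can be normalized in terms of $h(f)$. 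The key input is that the ramification of $f$ over a point $\alpha_i$ (a root of $P'$) is forced: since $P(f)=Q(g)$, a zero $\p$ of $f-\alpha_i$ is a zero of $P(f)-P(\alpha_i)=Q(g)-P(\alpha_i)$ of multiplicity at least $(p_i+1)v_\p(f-\alpha_i)$, and under Hypothesis~I on $Q$ this last quantity is controlled by whether $P(\alpha_i)$ is or is not a critical value of $Q$, i.e.\ by whether $(i,j)\in A_0$ for some $j$.

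Second, I would split the sum $\sum_\p(e_\p(f)-1)$ according to where $f(\p)$ lands. The genuinely useful contributions come from points $\p$ with $f(\p)=\alpha_i$ for some $i$. If $i$ is such that $(i,j)\notin A_0$ for every $j$ — i.e.\ $P(\alpha_i)$ is not a critical value of $Q$ — then over $P(\alpha_i)$ the map $Q$ is unramified, so every point of $g$ over $P(\alpha_i)$ is simple, and the multiplicity bookkeeping forces $v_\p(f-\alpha_i)\ge p_i+1$ at each such $\p$; summing, these points contribute at least $\bigl(1-\tfrac1{p_i+1}\bigr)\cdot$(number of zeros of $P(f)-P(\alpha_i)$ counted suitably), which after using $\deg(P(f)-P(\alpha_i))=n\,h(f)$ yields a contribution of the shape $\bigl(p_i - \text{(something)}\bigr)h(f)$; this is the source of the $\sum_{(i,j)\notin A_0}p_i$ term. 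If instead $(i,j)\in A_0$, then $Q-P(\alpha_i)$ has a zero of multiplicity $q_j+1$ at $\beta_j$, and the interaction between the order-$(p_i+1)$ vanishing forced by $P$ and the order-$(q_j+1)$ vanishing allowed by $Q$ means that the effective extra ramification of $f$ at such a point is governed by $\max\{p_i+1,q_j+1\}$ versus the actual order — this is precisely why only the pairs in $A_1$ (where $p_i>q_j$, so $f$ is forced to be "more ramified" than $g$ can absorb) contribute, with weight $p_i-\tfrac{n}{m}q_j$ after translating the $g$-side ramification back to the $f$-side via $n\,h(f)=m\,h(g)$. The pairs in $A_2$ contribute nothing on the $f$-side (they would contribute on the $g$-side instead), and the global defect $-\tfrac{m+n}{m}h(f)$ collects the $-2h(f)$ from Riemann--Hurwitz for $f$ together with a correction coming from the points over $\infty$ and the generic fiber count, exactly as the $-\tfrac{m+n}{m}$ appeared in Theorem~\ref{th1}(b) (indeed when $A_0$ consists entirely of pairs with $p_i>q_j$ or $p_i<q_j$ this should reduce compatibly).

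Third, I would assemble these local lower bounds into a single inequality $\sum_\p(e_\p(f)-1)\ge \bigl(\sum_{(i,j)\in A_1}(p_i-\tfrac nm q_j)+\sum_{1\le i\le l,(i,j)\notin A_0}p_i - \tfrac{m+n}{m} + 2\bigr)h(f) - 2$ (the $+2$ and $-2$ absorbing into the $-2h(f)$ and the left side of Riemann--Hurwitz), and then Riemann--Hurwitz for $f$ immediately gives the stated bound
\begin{equation*}
\Bigl(\Bigl(\sum_{(i,j)\in A_1}p_i-\tfrac{n}{m}q_j\Bigr)+\Bigl(\sum_{1\le i\le l,(i,j)\notin A_0}p_i\Bigr)-\tfrac{m+n}{m}\Bigr)h(f)\le 2\gen-2.
\end{equation*}
The step I expect to be the main obstacle is the careful multiplicity analysis at the points $\p$ with $(i,j)\in A_0$ and $p_i>q_j$: one must show cleanly that the surplus ramification forced on $f$ by the equation $P(f)=Q(g)$ — beyond what a degree-$h(g)$ map $g$ ramified to order $q_j+1$ over $\beta_j$ can account for — is at least $(p_i-\tfrac nm q_j)$ per unit of height, and in particular that different roots $\alpha_i$ paired to the same $\beta_j$ do not double-count the ramification budget of $g$ at $\beta_j$. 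Hypothesis~I on both $P$ and $Q$ is what makes this tractable, since it guarantees each critical value of $P$ (resp.\ $Q$) comes from a single critical point, so the fibers decompose without overlap; handling the possibility that $P(\alpha_i)=Q(\beta_j)$ for several incomparable pairs, and separating the $A_1$-contribution from the $A_2$-contribution so that the $f$-inequality and the analogous $g$-inequality are consistent with $n\,h(f)=m\,h(g)$, will require the most bookkeeping care.
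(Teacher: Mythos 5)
Your proposal takes a genuinely different route from the paper --- a direct Riemann--Hurwitz/ramification count for the covering maps $f$ and $g$ --- whereas the paper bounds $h(P'(f),Q'(g))$ from above by $\frac{m+n}{m}h(f)+2\gen-2$ (Lemma~\ref{lmCT}, via the identity $d_tf\,P'(f)=d_tg\,Q'(g)$ and the sum formula) and from below by dividing the pair $(P'(f),Q'(g))$ by an auxiliary function $G_2=\prod_{i\le l_1}(g-\beta_{j(i)})^{q_{j(i)}}\prod_{l_1<i\le l_0}(f-\alpha_i)^{p_i}$ chosen to cancel their common zeros. As written, however, your argument contains one incorrect step and leaves the central computation undone.

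First, the claim that for $i$ with $(i,j)\notin A_0$ for every $j$ ``the multiplicity bookkeeping forces $v_\p(f-\alpha_i)\ge p_i+1$'' is wrong. At such a point $\p$ one has $(p_i+1)v_\p(f-\alpha_i)=v_\p(g-\gamma)$, where $\gamma$ is the (simple) preimage of $P(\alpha_i)$ under $Q$ with $g(\p)=\gamma$; this forces $v_\p(g-\gamma)\ge p_i+1$, i.e.\ the surplus ramification lands on the $g$-side, not the $f$-side. The map $f$ may be entirely unramified over $\alpha_i$, so Riemann--Hurwitz applied to $f$ alone, as in your third step, cannot produce the term $\sum_{(i,j)\notin A_0}p_i\,h(f)$. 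One would have to run the count for $g$ (of degree $h(g)=\frac{n}{m}h(f)$), or for both maps jointly --- which is precisely what the paper's quantity $h(P'(f),Q'(g))=h(d_tf,d_tg)$ packages in one object.

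Second, the heart of the theorem --- the coefficient $p_i-\frac{n}{m}q_j$ attached to each pair $(i,j)\in A_1$, and the verification that the pairs of $A_0$ outside $A_1$ contribute nothing harmful --- is only described ``in shape'' and is explicitly deferred as ``the main obstacle.'' This is not a detail one can wave at: the local relation at a common zero is $(p_i+1)v_\p(f-\alpha_i)=(q_j+1)v_\p(g-\beta_j)$ (Lemma~\ref{lm10}), and extracting the stated coefficients requires a precise choice of what to cancel at each pair. The paper does this by placing $(g-\beta_{j(i)})^{q_{j(i)}}$ into $G_2$ when $p_i>q_{j(i)}$ and $(f-\alpha_i)^{p_i}$ otherwise, evaluating the pole contribution exactly, and invoking Lemma~\ref{lm8} (Hypothesis~I) to guarantee the pairing $i\mapsto j(i)$ is one-to-one so that no cancellation is double-counted. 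Until that bookkeeping is actually carried out, the proposed proof is incomplete.
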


\begin{cor}\label{th4}  Let $P(X)$ and $Q(X)$ satisfy Hypothesis I.  Suppose that $f$ and $g$ are two
 rational functions in $\K$ such that $P(f) = Q(g)$. If 
$$  \displaystyle\sum_{(i, j)\in A_1}(p_i-\frac{n}{m}q_j)+
\displaystyle\sum_{1\leq i\leq l,(i, j)\notin A_0}p_i-\frac{m+n}{m}> \max\{0,2\gen-2\}.$$
Then $f$ and $g$ are constants:
\end{cor}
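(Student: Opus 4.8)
The plan is to deduce the corollary from Theorem~\ref{th2} by contradiction; the only real work is to dispose of the degenerate configurations that Theorem~\ref{th2} does not literally cover, namely solutions with $f=g$ and solutions in which one of $f,g$ is constant. Throughout I will use the elementary fact that, since $k$ is algebraically closed and $\K$ has transcendence degree one over $k$, every element of $\K\setminus k$ is transcendental over $k$; this lets one pass from an identity in $\K$ to an identity in $k[X]$.

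Suppose, contrary to the conclusion, that $f$ and $g$ are not both constant. First I would treat the case in which exactly one of them is constant. If $f=c_0\in k$ while $g$ is non-constant, then $Q(g)=P(c_0)\in k$, and since $g$ is transcendental over $k$ the polynomial $Q(X)-P(c_0)\in k[X]$ must vanish identically, so $Q$ is constant, contradicting the standing assumption that $Q$ is non-linear; the case in which $g$ is the constant one is symmetric and forces $P$ to be constant. Hence both $f$ and $g$ are non-constant. Next I would rule out $f=g$: then $P(f)=Q(f)$, so $P(X)-Q(X)$ vanishes at the transcendental element $f$, whence $P=Q$ in $k[X]$. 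In that case $n=m$; the roots of $Q'=P'$ are exactly those of $P'$ with the same multiplicities, so after reindexing $\beta_i=\alpha_i$, $q_i=p_i$, and because $P$ satisfies Hypothesis~I the equality $P(\alpha_i)=Q(\beta_j)$ holds precisely when $i=j$. Therefore $A_0=\{(i,i)\mid 1\le i\le l\}$, $A_1=\emptyset$, and the index set $\{1\le i\le l\mid (i,j)\notin A_0\}$ is empty, so the quantity on the left-hand side of the corollary's hypothesis collapses to $-\frac{m+n}{m}=-2$; but $-2\le 0\le\max\{0,2\gen-2\}$, contradicting that this quantity is strictly greater than $\max\{0,2\gen-2\}$. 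Thus $f\ne g$.

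At this point $f$ and $g$ are distinct non-constant rational functions in $\K$ with $P(f)=Q(g)$, so Theorem~\ref{th2} applies and gives
$$c\cdot h(f)\ \le\ 2\gen-2,\qquad\text{where}\qquad c=\sum_{(i,j)\in A_1}\big(p_i-\tfrac{n}{m}q_j\big)+\sum_{1\le i\le l,\,(i,j)\notin A_0}p_i-\tfrac{m+n}{m}.$$
By the hypothesis of the corollary, $c>\max\{0,2\gen-2\}\ge 0$. Since a non-constant element of $\K$ has at least one pole we have $h(f)\ge 1$, hence $c\cdot h(f)\ge c>\max\{0,2\gen-2\}\ge 2\gen-2$, contradicting the displayed bound. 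This contradiction proves that $f$ and $g$ must both be constant.

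I do not anticipate a serious obstacle: once Theorem~\ref{th2} is in hand the corollary is essentially bookkeeping. The single point requiring care is the reduction to the hypotheses of Theorem~\ref{th2} — one must observe that a non-constant function field element is transcendental over $k$ (so that $P(f)=Q(g)$ with one side constant forces a polynomial to be constant, and $f=g$ forces $P=Q$), and then verify that under Hypothesis~I the case $P=Q$ makes $c=-2$, so the corollary's strict lower bound on $c$ can never be met there.
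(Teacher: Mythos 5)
Your proof is correct and follows the route the paper intends: the corollary is stated as an immediate consequence of Theorem~\ref{th2}, obtained by combining the bound $c\,h(f)\le 2\gen-2$ with $h(f)\ge 1$ for non-constant $f$. Your additional care in ruling out the degenerate cases ($f$ or $g$ constant, and $f=g$, which forces $P=Q$ and makes the left-hand side equal to $-2$) is a correct and welcome completion of details the paper leaves implicit.
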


We will see that Hypothesis I tells us
that there are not too many $(i,j)\in A_0$ (see Lemma~\ref{lm8}).

Especially, when the genus $\gen =0$ and  the degrees of $P(X)$ and $Q(X)$ are the same, the
following theorem gives a necessary and sufficient condition such that
the equation $P(X)=Q(Y)$ has no non-constant rational function solution.

\begin{theorem}\label{th3}
Let $\gen=0$ and $P(X),\  Q(X)$  satisfy Hypothesis  I and suppose $n=m$. Suppose that $f$ and $g$ are two
 rational functions in $\K$ such that $P(f) = Q(g)$. Then $f$ and $g$ are constant  if and only if $P(X)$ and
$Q(X)$ satisfy none of  the following conditions
 \begin{enumerate}
\item[\bf (A)]\ $P(X)-Q(Y)$ has a linear factor.
  \item[\bf (B)]\ $l=1$, $h=2$, $p_1=q_1+1$, $q_2=1$ and
$P(\alpha_1)=Q(\beta_1)$; or $h=1$, $l=2$, $q_1=p_1+1$, $p_2=1$ and
$P(\alpha_1)=Q(\beta_1)$.
  \item[\bf (C)]\ $l=h=2$, $p_2=q_2=1$, $p_1=q_1$ and $P(\alpha_1)=Q(\beta_1)$.
  \item[\bf (D)]\ $l=h=3$, $p_i=q_i=1$ for any $i=1,2,3$ and
$P(\alpha_i)=Q(\beta_i)$ for any
  $i=1,2,3$ (after changing the indices).
    \item[\bf (E)]\  $l=h=p_1=q_1=1.$
\end{enumerate}
\end{theorem}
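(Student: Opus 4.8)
Because $\gen=0$ we have $\K=k(t)$, and a non-constant solution $(f,g)$ of $P(f)=Q(g)$ is precisely a non-constant morphism $\PP^1\to\overline{[P(X)=Q(Y)]}\subset\PP^2$; its image is an irreducible component of geometric genus $0$, and conversely the normalization $\PP^1$ of any geometric-genus-$0$ component furnishes such a solution (its coordinate functions lie in $k(t)=\K$). So I will read the theorem as the equivalence: \emph{the curve $[P(X)=Q(Y)]$ has a component of geometric genus $0$ if and only if one of \textnormal{(A)}--\textnormal{(E)} holds}, and prove the two implications separately. I emphasize the orientation, since it is easy to invert: for the constancy direction I will \emph{assume a non-constant solution and deduce} one of (A)--(E); only in the converse will I actually exhibit solutions.

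\textbf{Constancy direction (non-constant solution $\Rightarrow$ one of (A)--(E)).} First I would dispose of $f=g$: then $P(f)=Q(f)$ with $f$ non-constant forces $P=Q$, so $X-Y$ divides $P(X)-Q(Y)$ and (A) holds; thereafter I assume $f\ne g$, so $h(f)>0$. By Hypothesis~I and Lemma~\ref{lm8}, $A_0$ is a partial matching of $\{\alpha_i\}$ with $\{\beta_j\}$; let $M$ be the set of matched indices, $j(i)$ the partner of $i\in M$, and set
$$U_P=\sum_{i\notin M}p_i,\quad U_Q=\sum_{j\notin M}q_j,\quad D^{+}=\sum_{\substack{i\in M\\ p_i>q_{j(i)}}}\bigl(p_i-q_{j(i)}\bigr),\quad D^{-}=\sum_{\substack{i\in M\\ p_i<q_{j(i)}}}\bigl(q_{j(i)}-p_i\bigr).$$
The degree identity $\sum_i p_i=n-1=m-1=\sum_j q_j$ yields the balance relation $D^{+}+U_P=D^{-}+U_Q=:T$. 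For $n=m$ and $\gen=0$ the bracketed coefficient of Theorem~\ref{th2} is exactly $T-2$, so $h(f)>0$ forces $T-2<0$, i.e.\ $T\le1$; by balance $U_P,U_Q,D^{+},D^{-}\le1$ too. A finite case analysis then limits the ramification skeletons: $T=0$ forces a bijection $l=h$ with $p_i=q_{j(i)}$ throughout, while $T=1$ places the single unit of defect either at one unmatched simple critical point on each side (all matched multiplicities equal) or at one matched pair whose multiplicities differ by $1$ (all roots matched on one side). Weighed against $\sum_i p_i=n-1$, the minimal skeletons are exactly (C) (from the former, $l=h=2$), (B) (from the latter, $l=1,h=2$ and its transpose), and (E) (when $n=2$).

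\textbf{Where the difficulty lies.} The hard part is that $T\le1$ does not by itself single out (A)--(E): a matched pair of equal multiplicity contributes $0$ to $T$, so skeletons with arbitrarily many such pairs survive the inequality (every equal-multiplicity bijection has $T=0$). For these residual skeletons I would promote the inequality to an exact genus computation, evaluating the geometric genus of the components of the fibre product $\PP^1_X\times_{\PP^1_w}\PP^1_Y$ (with $w=P(X)=Q(Y)$) by Riemann--Hurwitz, reading the local monodromy from the $p_i,q_j$ and the totally ramified fibres over $\infty$. The content to extract is that a residual skeleton carries a genus-$0$ component only when $P(X)-Q(Y)$ acquires a linear factor (condition (A)) or collapses to the exceptional small cases (D) (degree $4$, three matched simple critical values) and (E). The degree-$4$ case (D), where a genus-$0$ component of the $16$-sheeted fibre product must be produced by a direct Riemann--Hurwitz count, is the delicate endpoint and the step I expect to be the crux.

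\textbf{Existence direction (one of (A)--(E) $\Rightarrow$ non-constant solution).} Here each condition should yield a genus-$0$ component explicitly. (A): a linear factor is a rational line, giving $(f,g)$ at once. (E): $[P(X)=Q(Y)]$ is a conic, hence rational. For (B) and (C), Hypothesis~I together with the matched value $e:=P(\alpha_1)=Q(\beta_1)$ forces $P(X)-e$ and $Q(Y)-e$ each to vanish to order $n-1$ at $\alpha_1,\beta_1$; writing $u=X-\alpha_1$, $v=Y-\beta_1$, the equation becomes $c\,u^{n}=b'v^{n-1}(v-\delta)$ in case (B) and $c\,u^{n-1}(u-a')=d\,v^{n-1}(v-b')$ in case (C), and the substitution $s=u/v$ makes it linear in $v$, so $v=v(s)$ and $u=s\,v(s)$ are rational in $s$. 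Case (D) is supplied by the genus-$0$ component found in the computation above. In every case the parametrizing coordinate lies in $k(t)=\K$, producing a non-constant $(f,g)$ and completing the equivalence.
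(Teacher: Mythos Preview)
Your opening reduction matches the paper's: Theorem~\ref{th2} with $n=m$ and $\gen=0$ gives your $T:=D^{+}+U_P=D^{-}+U_Q\le 1$, which is precisely the paper's condition $(\mathcal{H})$, and the small skeletons (the paper's $l_0\le1$) fall out the same way. The divergence is at the crux, the residual skeletons with $l_0\ge2$ carrying many equal-multiplicity matched pairs. There the paper does \emph{not} leave the height framework. It introduces the linear forms
$L_{i_1,i_2}=(g-\beta_{j(i_1)})-\dfrac{\beta_{j(i_1)}-\beta_{j(i_2)}}{\alpha_{i_1}-\alpha_{i_2}}(f-\alpha_{i_1})$,
builds an auxiliary $G$ from a power of $L_{1,2}$ (and in subcases $L_{3,4}$ or $L_{1,3}$) times $\prod_{i\ge3}(f-\alpha_i)^{p_i}$, and squeezes the sharpened bound of Lemma~\ref{lmCT}(ii) against the divisibility $(p_i+1)v_\p(f-\alpha_i)=(q_j+1)v_\p(g-\beta_j)$ of Lemma~\ref{lm10} to force a contradiction. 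Your plan instead abandons heights and proposes a Riemann--Hurwitz count on the fibre product $\PP^1\times_{\PP^1}\PP^1$.

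That switch has a real gap. The $(p_i,q_j)$-skeleton together with the matching determines only the \emph{total} Euler characteristic of the normalized fibre product (Riemann--Hurwitz for the composite cover of $\PP^1$), not the number of irreducible components nor how the branch cycles distribute among them; so you cannot read off the genus of an individual component from this data alone. To carry your plan through you would need either an explicit monodromy computation (identifying the orbits of the joint permutation action on pairs of sheets) or an irreducibility criterion for $P(X)-Q(Y)$ valid in every residual case, and neither is a consequence of the multiplicity data you record. The paper's $L_{i_1,i_2}$-device bypasses this issue entirely by staying with $f,g\in\K$. Your existence direction for (D) is likewise left resting on the same uncompleted computation; the paper dispatches (A)--(D) by citing \cite[Lemma~6]{AE}. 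Your explicit $s=u/v$ parametrizations for (B) and (C) are a nice addition, though note a slip: in (B) the order of vanishing of $P(X)-e$ at $\alpha_1$ is $p_1+1=n$, not $n-1$ (your displayed equation $c\,u^n=b'v^{n-1}(v-\delta)$ is nonetheless correct).
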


\medskip

{\it Acknowledgments.} 
We would like to thank the referee for her/his very careful reading and helpful suggestions.

\section{Some lemmas}
For simplicity of notation, for $i\ge 1$, $t\in \K\setminus  k$ and
$\eta\in
\K$, we denote by
$$  d_t^i\eta:=\frac{d^i\eta}{dt^i}, \qquad
d_\p^i\eta:=\frac{d^i\eta}{dt_\p^i}.
$$
We first recall the following well-known properties, which follow  from the
Riemann-Roch theorem and the sum formula.

\begin{prop}\label{RR} Let  $\eta\ne 0\in {\bf K}$ and
$[f,g]\in\PP^1(\K)$.  We have
\begin{enumerate}
\item[(i)]$\displaystyle\sum_{\p\in C} v_\p(d_\p \eta)=2\gen -2$ if $\eta$ is not constant.
\item[(ii)] $\displaystyle\sum_{\p\in C} v_\p( \eta)=0$.
\item[(iii)] $h(\eta f,\eta g )=h(f,g).$
\end{enumerate}
\end{prop}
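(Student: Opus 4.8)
The plan is to deduce all three identities from two standard consequences of the Riemann--Roch theorem: that the degree of a principal divisor is zero, and that the degree of a canonical divisor equals $2\gen-2$.

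I would prove (ii) first, since (iii) relies on it. For a nonzero $\eta\in\K$, the quantity $\sum_{\p\in C}v_\p(\eta)$ is precisely the degree of the principal divisor of $\eta$. Viewing $\eta$ as a morphism $C\to\PP^1$, its zero divisor and its polar divisor each have degree equal to the degree of that morphism, so the two cancel and the total degree is $0$. This is exactly the sum (product) formula for the function field $\K$, and I would invoke it in that form.

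Part (iii) then follows by a short manipulation of the defining sum. Using $v_\p(\eta h)=v_\p(\eta)+v_\p(h)$ in each coordinate, one has
\begin{align*}
h(\eta f,\eta g)
&=\sum_{\p\in C}-\min\{v_\p(\eta)+v_\p(f),\,v_\p(\eta)+v_\p(g)\}\\
&=\sum_{\p\in C}-v_\p(\eta)-\sum_{\p\in C}\min\{v_\p(f),v_\p(g)\}.
\end{align*}
The first sum vanishes by (ii), leaving $h(f,g)$. The only point to check is that pulling the common $v_\p(\eta)$ out of the minimum is legitimate, which holds because adding a constant commutes with $\min$.

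The heart of the matter is (i), where I would first pin down the meaning of $v_\p(d_\p\eta)$ and relate it to the divisor of the global differential $d\eta$. For each $\p$, the uniformizer $t_\p$ generates the differentials locally, so near $\p$ we may write $d\eta=(d_\p\eta)\,dt_\p$ and, crucially, $v_\p(dt_\p)=0$ at $\p$ itself. Hence $v_\p(d_\p\eta)=v_\p(d\eta)$, where the right-hand side is the order at $\p$ of the globally defined meromorphic differential $d\eta$ (which is nonzero precisely because $\eta$ is non-constant and the characteristic is zero). Summing over all $\p$ then gives the degree of the canonical divisor of $d\eta$, namely $2\gen-2$. The main obstacle is exactly this bookkeeping: one must keep track of the fact that $v_\p(d_\p\eta)$ is computed with the point's own uniformizer $t_\p$, and verify that the contribution of $dt_\p$ is zero at $\p$ even though $dt_\p$ may have zeros or poles elsewhere, so that the local derivative orders glue into the single canonical divisor whose degree Riemann--Roch controls.
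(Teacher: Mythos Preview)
Your proposal is correct and aligns with the paper's approach: the paper does not give a detailed proof of this proposition but simply states that the properties ``follow from the Riemann--Roch theorem and the sum formula,'' which is precisely the framework you invoke. Your argument fills in exactly those details---(ii) is the sum formula (degree of a principal divisor is zero), (iii) follows from (ii) by pulling the common term out of the minimum, and (i) is the degree of the canonical divisor via Riemann--Roch---so there is nothing to contrast.
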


In order to study some sufficient conditions ensuring that the equation $P(X)=Q(Y)$
has no non-constant rational function solutions, the basic idea is as follows.
Suppose there are two distinct non-constant rational functions $f$ and $g$
in $\K$ such that $P(f)=Q(g)$. We will study the height of $f$ and $g$
and give upper bounds for $h(f)$ and $h(g)$. We first give an upper bound
for $h(P'(f),Q'(g))$ thanks to the following lemma.

\begin{lemma}\label{lmCT} Suppose that   $f$ and
$g$ are distinct non-constant rational functions in $\K$ such that $P(f)=Q(g)$.
Then
\begin{enumerate}
\item[(i)] $nh(f)=mh(g);$
\item[(ii)] $ h({P'(f)},{Q'(g)})+
\displaystyle\sum_{\p\in C}  \min\{v_{\p}^0(d_\p f), v_{\p}^0(d_\p g)\}
\leq \frac{m+n}{m}h(f)+2\gen-2; $
\end{enumerate}
where $v_{\p}^0(\eta) := \max  \{0, v_{\p}(\eta)\}$  for $\eta \in \K^*.$
\end{lemma}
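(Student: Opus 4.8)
The plan is to exploit the functional equation $P(f)=Q(g)$ by differentiating it with respect to a global derivation. For part (i), I would observe that $P(f)$ and $Q(g)$ are the same element of $\K$, so they have the same poles with the same multiplicities. Since $P$ has degree $n$, a pole of $f$ of order $e$ at $\p$ becomes a pole of $P(f)$ of order $ne$; likewise $Q(g)$ has a pole of order $me'$ where $e'$ is the pole order of $g$ at $\p$. Matching pole divisors gives $n h(f) = h(P(f)) = h(Q(g)) = m h(g)$, which also forces $f$ and $g$ to have poles exactly at the same points. This is the quick part and I expect no obstacle.

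For part (ii), the idea is to differentiate $P(f)=Q(g)$ with respect to the local uniformizer $t_\p$, obtaining $P'(f)\,d_\p f = Q'(g)\,d_\p g$. Taking the associated point $[P'(f),Q'(g)]\in\PP^1(\K)$ and using the Riemann--Roch/sum-formula facts in Proposition~\ref{RR}, together with property (iii) on multiplying by a common factor, I would relate $h(P'(f),Q'(g))$ to $h(d_\p f, d_\p g)$ and then to $h(f)$. Concretely, from $P'(f)\,d_\p f = Q'(g)\,d_\p g$ one gets that $\frac{P'(f)}{Q'(g)} = \frac{d_\p g}{d_\p f}$ as an element of $\K$, so $h(P'(f),Q'(g)) = h(d_\p g, d_\p f)$ by Proposition~\ref{RR}(iii) (clearing the common factor). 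I would then bound $h(d f, d g)$ in terms of $h(f)$ and $h(g)$ by a pole-counting argument: at a pole of $f$ of order $e$, the derivative $d_\p f$ has a pole of order $e+1$, contributing the extra $\frac{m+n}{m}h(f)$ term once one also accounts for the poles coming from zeros of the uniformizer derivative; at points where $f$ and $g$ are both finite, the term $\sum_\p \min\{v_\p^0(d_\p f), v_\p^0(d_\p g)\}$ records the common order of vanishing of the derivatives, which must be subtracted. The $2\gen-2$ contribution enters through $\sum_\p v_\p(d_\p t) = 2\gen-2$ (Proposition~\ref{RR}(i)) when passing between $d_\p$ and a fixed global derivation $d_t$.

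The main obstacle I anticipate is the careful bookkeeping at the poles: one must track simultaneously the zeros of $P'(f)$, the poles of $d_\p f$, and the discrepancy between the local derivations $d_\p$ at different points, and show that all the positive contributions assemble into exactly $\frac{m+n}{m}h(f) = h(f) + \frac{m}{m}h(f) = h(f)+h(g)$ using part (i) to rewrite $\frac{n}{m}h(f) = h(g)$. The inequality (rather than equality) arises because at a pole of $f$ the value $v_\p^0(d_\p f)$ is $0$ while we may be discarding some genuine vanishing of $d_\p g$ there, or vice versa, so the $\min$ term is only a lower bound for what is actually lost. I would organize the computation by splitting $C$ into the common pole set of $f$ and $g$ and its complement, handle each locally, and sum up, invoking Proposition~\ref{RR} to convert local sums into the global invariants $h(f)$, $h(g)$, and $2\gen-2$.
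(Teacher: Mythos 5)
Your proposal follows essentially the same route as the paper: part (i) by matching pole orders of $P(f)=Q(g)$, and part (ii) by differentiating to get $P'(f)\,df=Q'(g)\,dg$, identifying $h(P'(f),Q'(g))$ with $h(d_tf,d_tg)$ for a fixed global derivation, converting to the local derivations $d_\p$ at the cost of $\sum_{\p}v_\p(d_\p t)=2\gen-2$, and splitting the resulting sum over the poles and non-poles of $f$. The one inaccuracy is your account of where the inequality (rather than equality) enters: at a pole of $f$ the function $g$ also has a pole, so no vanishing of $d_\p g$ is being discarded there; the slack comes solely from bounding $\#\{\p\mid v_\p(f)<0\}\leq h(f)$ (poles counted without multiplicity versus with multiplicity), which converts the exact pole contribution $\frac{n}{m}h(f)+\#\{\p\mid v_\p(f)<0\}$ into the stated bound $\frac{m+n}{m}h(f)$.
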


\begin{proof}  Since $P(f)=Q(g)$,  for any $\p\in C$ such that $v_{\p}(f)<0$, we have
 $$n v_\p(f)=v_\p(P (f))=v_\p(Q (g))=m v_\p(g).$$
 Hence, $n h(f)=m h(g)$. This also yields that
$$ d_tfP'(f)=d_tgQ'(g), $$
  for  $t$ in $\K\setminus{\bf k}$, and hence
$$h(P'(f),Q'(g))=h(\frac{P'(f)}{Q'(g)},1)=h(\frac{d_tg}{d_tf},1)=h(d_tf,d_tg).$$
 Since
$d_{\p}f=d_tfd_{\p}t,$ it follows that $v_{\p}(d_{\p}f)=v_{\p}(d_tf)+v_{\p}(d_{\p}t),$
and hence
\begin{align*}  v_{\p}(d_tf)=v_{\p}(d_{\p}f)-v_{\p}(d_{\p}t).
\end{align*}
We have
\begin{align*} & h({P'(f)},{Q'(g)})= h(d_t f,d_tg)= \displaystyle\sum_{\p\in C}-\min\{v_{\p}(d_tf),v_{\p}(d_tg)\}\cr
 &=\displaystyle\sum_{\p\in C}-\min\{v_{\p}(d_{\p}f)-v_{\p}(d_{\p}t),v_{\p}(d_{\p}g)-v_{\p}(d_{\p}t)\}\cr
 &=\displaystyle\sum_{\p\in C}v_{\p}(d_{\p}t)+\displaystyle\sum_{\p\in C}-\min\{v_{\p}(d_{\p}f),v_{\p}(d_{\p}g)\}\cr
 &= \displaystyle\sum_{\p\in C}v_{\p}(d_{\p}t)+\displaystyle\sum_{v_{\p}(f)< 0}-\min\{v_{\p}(d_{\p}f),
v_{\p}(d_{\p}g)\}-\displaystyle\sum_{v_{\p}(f)\ge0}\min\{v_{\p}(d_{\p}f),
v_{\p}(d_{\p}g)\}.
\end{align*}
If $v_\p(f)<0$, then $v_{\p}(d_{\p}f)=v_\p(f)-1$ and
$v_\p(d_{\p}g)=v_{\p}(g)-1=\frac{n}{m}v_{\p}(f)-1.$ \\
If $v_\p(f)\ge 0$, then $v_{\p}(d_{\p}f)\ge 0$ hence
$v_{\p}^0(d_{\p}f)=\max\{0,v_{\p}(d_{\p}f)\}=v_{\p}(d_{\p}f)$
and $v_{\p}^0(d_{\p}g)=v_{\p}(d_{\p}g).$

All together, we have
\begin{align*}&h({P'(f)},{Q'(g)})\cr
&=2\gen-2+
\displaystyle\sum_{v_{\p}(f)< 0}-\min\{v_\p(f)-1,\frac{n}{m}v_{\p}(f)-1\}
-\displaystyle\sum_{v_{\p}(f)\ge0}\min\{v_{\p}^0(d_{\p}f),
v_{\p}^0(d_{\p}g)\}\cr
&= 2\gen-2+\displaystyle\sum_{v_{\p}(f)< 0}(-\frac{n}{m}v_{\p}f+1)  -\displaystyle\sum_{\p\in C}
\min\{v_{\p}^0(d_{\p}f), v_{\p}^0(d_{\p}g)\} \cr
  &= 2\gen-2 + \frac{n}{m}h(f)+\#\{\p\in C\mid v_{\p}(f)< 0\} -\displaystyle\sum_{\p\in C}
\min\{v_{\p}^0(d_{\p}f), v_{\p}^0(d_{\p}g)\}.
\end{align*}
 Clearly, $\#\{\p\in C\mid v_{\p}(f)< 0\}\le h(f)$. Therefore,
 \begin{align*}
 h({P'(f)},{Q'(g)})\leq 2\gen-2 + \frac{m+n}{m}h(f) -\displaystyle\sum_{\p\in C}
\min\{v_{\p}^0(d_{\p}f), v_{\p}^0(d_{\p}g)\}.
\end{align*}
Hence
\begin{align*}
h({P'(f)},{Q'(g)})+\displaystyle\sum_{\p\in C}  \min\{v_{\p}^0(d_\p f), v_{\p}^0(d_\p g)\}
\leq \frac{m+n}{m}h(f)+2\gen-2.
\end{align*}
\end{proof}

\noindent{\bf Remark.}
We note that Lemma \ref{lmCT} gives an upper bound for $h(P'(f),Q'(g)):$
\begin{equation}\label{bdtCT}
h({P'(f)},{Q'(g)})  \leq \frac{m+n}{m}h(f)+2\gen-2
\end{equation}
since $  v_{\p}^0(d_\p f)\ge 0$ and $v_{\p}^0(d_\p g) \ge 0$.

\begin{lemma}\label{hG} Suppose that   $f$ and
$g$ are distinct non-constant rational functions in $\K$ such that $P(f)=Q(g)$.
Then
$$ -\displaystyle\sum_{\p\in C, v_\p(f)<0}\min\{v_\p({P'(f)}),v_\p({Q'(g)})\}=(n-1)h(f)=(n-1)\frac mn h(g).$$
\end{lemma}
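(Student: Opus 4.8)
The plan is to reduce everything to a purely local computation at the poles of $f$, exactly as in the first lines of the proof of Lemma~\ref{lmCT}. Fix a point $\p\in C$ with $v_\p(f)<0$ and put $e:=-v_\p(f)\ge 1$. Since both $P$ and $Q$ are assumed nonlinear, $P'$ has degree $n-1\ge 1$ and $Q'$ has degree $m-1\ge 1$; because $f$ has a pole at $\p$, the leading term of $P'$ dominates when we substitute $f$, so $v_\p(P'(f))=(n-1)v_\p(f)=-(n-1)e$. From $P(f)=Q(g)$ we get $n\,v_\p(f)=v_\p(P(f))=v_\p(Q(g))=m\,v_\p(g)$, so $v_\p(g)<0$ and $v_\p(g)=\tfrac nm v_\p(f)=-\tfrac nm e$; again the leading term of $Q'$ dominates, giving $v_\p(Q'(g))=(m-1)v_\p(g)=-\tfrac{n(m-1)}{m}e$.

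Next I would compare the two exponents using the standing hypothesis $n\ge m$. Since $\tfrac nm\ge 1$ we have $\tfrac{n(m-1)}{m}=n-\tfrac nm\le n-1$, and multiplying by $-e<0$ reverses this to
$$v_\p(Q'(g))=-\tfrac{n(m-1)}{m}e\ \ge\ -(n-1)e=v_\p(P'(f)).$$
Hence $\min\{v_\p(P'(f)),v_\p(Q'(g))\}=v_\p(P'(f))=(n-1)v_\p(f)$ at every place where $v_\p(f)<0$.

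Finally I would sum over all poles of $f$. Since $-\min\{v_\p(P'(f)),v_\p(Q'(g))\}=-(n-1)v_\p(f)=(n-1)\bigl(-v_\p(f)\bigr)$ and these are precisely the poles contributing to $h(f)$,
$$-\sum_{\p\in C,\ v_\p(f)<0}\min\{v_\p(P'(f)),v_\p(Q'(g))\}=(n-1)\sum_{v_\p(f)<0}\bigl(-v_\p(f)\bigr)=(n-1)h(f),$$
and the remaining equality $(n-1)h(f)=(n-1)\tfrac mn h(g)$ is just Lemma~\ref{lmCT}(i), which says $nh(f)=mh(g)$.

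The whole statement is thus essentially the definition of height plus one inequality; the only place where any hypothesis is genuinely used is the comparison $\tfrac{n(m-1)}{m}\le n-1$ in the second step, which needs $n\ge m$, so that is where I expect the (modest) content to lie. The point to be careful about is the sign flip there: one has $0\le \tfrac{n(m-1)}{m}\le n-1$ and multiplying through by the negative number $v_\p(f)$ makes $v_\p(P'(f))$ the smaller of the two values, as claimed.
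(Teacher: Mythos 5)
Your proposal is correct and follows essentially the same route as the paper: compute $v_\p(P'(f))=(n-1)v_\p(f)$ and $v_\p(Q'(g))=(m-1)\tfrac nm v_\p(f)$ at each pole of $f$, identify the minimum as $(n-1)v_\p(f)$ using the standing convention $n\ge m$, and sum. The only difference is cosmetic: you spell out explicitly the inequality $\tfrac{n(m-1)}{m}\le n-1$ that the paper leaves implicit when taking the minimum.
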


\begin{proof}
Since $P(f)=Q(g)$ it follows that at a point $\p\in C$ such that $v_\p(f)<0$,  $v_\p(P'(f))=(n-1)v_\p(f), $ $v_\p(g)<0$ and $nv_\p(f)=mv_\p(g)$. We also have
$v_\p(Q'(g))=(m-1)v_\p(g)$.
Therefore,
\begin{align*}\min\{v_\p({P'(f)}),v_\p({Q'(g)})\}&=\min\{(n-1)v_\p(f),(m-1)\frac{n}{m}v_\p(f)\}\cr
&=(n-1)v_\p(f)=(n-1)\frac mnv_\p(g),
\end{align*}
which yields
\begin{align*}-\displaystyle\sum_{\p\in C, v_\p(f)<0}\min\{v_\p({P'(f)}),v_\p({Q'(g)})\}&=(n-1)h(f) =(n-1)\frac mn h(g).
\end{align*}
\end{proof}

We now turn to finding a lower bound for $h(P'(f),Q'(g))$ in terms of
$h(f)$. To find a lower bound for $h(P'(f),Q'(g))$, we will need
to find  an element $G$ in $\K$ such that: firstly the height of $G$ is not too big
and, secondly the vanishing order of $G$ at each point of the curve is at least
the minimum of the vanishing order of 
$P'(f)$ and $Q'(g)$.

\section{Proof of Theorem  \ref{th1}}
Recall that we have set:
\begin{align*}&B_0=\{i\mid 1\leq i\leq l,\  P(\alpha_i)\ne
Q(\beta_j), \text{ for all $ j=1,...,h\},$}\\
&B_1=\{i\mid 1\leq i\leq h,\  Q(\beta_i)\ne P(\alpha_j) , \text{  for
all  $j=1,...,l\}$.} \end{align*}
By rearranging the indices if necessary, we may assume
$$B_0=\{1, 2, \ldots, l_1\} \,\,{\rm and}\,\, B_1=\{1, 2, \ldots, h_1\}.$$

\begin{proof}[\textbf{Proof of Theorem \ref{th1}.}] The assertion (a) is given in Lemma~\ref{lmCT}~(i).

We thus prove (b) and (c), beginning with (b).
We take
$$G(f):=\displaystyle\prod_{i\notin B_0}{(f-\alpha_i)}^{p_i}=\displaystyle\prod_{i=l_1+1}^l{(f-\alpha_i)}^{p_i},$$
and $d:=\displaystyle\sum_{i=l_1+1}^l p_i$. We have
\begin{align}\label{hPQphay}h(P'(f),Q'(g)) &= h\left(\frac{P'(f)}{G(f)},\frac{Q'(g)}{G(f)}\right)\cr
&= -\displaystyle\sum_{\p\in C}\min\{v_\p(\frac{P'(f)}{G(f)}),v_\p(\frac{Q'(g)}{G(f)})\}\cr
&=-\displaystyle\sum_{v_\p(f)<0}\min\{v_\p(\frac{P'(f)}{G(f)}),v_\p(\frac{Q'(g)}{G(f)})\}\cr
&\quad\quad-\displaystyle\sum_{v_\p(f)\geq0}\min\{v_\p(\frac{P'(f)}{G(f)}),v_\p(\frac{Q'(g)}{G(f)})\}.
\end{align}
If $v_\p(f)<0$ then $v_\p(G(f))=dv_\p(f)$.
Therefore, by Lemma~\ref{hG},
\begin{align}\label{v}-\displaystyle\sum_{v_\p(f)<0}\min\{v_\p(\frac{P'(f)}{G(f)}),v_\p(\frac{Q'(g)}{G(f)})\}
&=-\displaystyle\sum_{v_\p(f)<0}\min\{v_\p({P'(f)}),v_\p({Q'(g)})\}\cr
&\quad\quad+\displaystyle\sum_{v_\p(f)<0}v_\p(G)\cr
&=(n-1-d)h(f).
\end{align}
Putting (\ref{hPQphay}) and (\ref{v}) together, we have
\begin{align}\label{h}h(P'(f),Q'(g))
&=(n-1-d)h(f)-\displaystyle\sum_{v_\p(f)\geq0}\min\{v_\p(\frac{P'(f)}{G(f)}),v_\p(\frac{Q'(g)}{G(f)})\}.
\end{align}
 Therefore,  to obtain a lower bound on $h(P'(f),Q'(g)),$  our goal
is  to prove
\begin{equation}\label{*}\displaystyle\sum_{v_\p(f)\geq0}\min\{v_\p(\frac{P'(f)}{G(f)}),v_\p(\frac{Q'(g)}{G(f)})\}\le 0.
\end{equation}

We have
$$\frac{P'(f)}{G(f)}=\frac{\displaystyle\prod_{i=1}^{l}{(f-\alpha_i)}^{p_i}}
{\displaystyle\prod_{i=l_1+1}^{l}{(f-\alpha_i)}^{p_i}}=\displaystyle\prod_{i=1}^{l_l}{(f-\alpha_i)}^{p_i}.$$
For our purpose, taking into account displayed formula (\ref{*}),  we only have to consider points $\p\in C$ such that $v_\p(f)\geq0$. We first consider those $\p$ satisfying $\frac{P'(f)}{G(f)}(\p)\ne0$, (i.e $v_\p(\frac{P'(f)}{G(f)})=0)$; hence
\begin{align}\label{a1}\min\{v_\p(\frac{P'(f)}{G(f)}),v_\p(\frac{Q'(g)}{G(f)})\}\le 0.\end{align}
At such point satisfying $\frac{P'(f)}{G(f)}(\p)=0$, there exists $1\le i\le l_1$ such that $f(\p)-\alpha_i=0$. By definition of the set $B_0$, in this case, $g(\p)-\beta_j\neq0$ for all $j\in \{1, 2, \ldots, h\}$. This means $v_\p({Q'(g)})=v_\p({\displaystyle\prod_{j=1}^h(g-\beta_j)^{q_j}})=0,$
from which it follows that
$
v_\p(\frac{Q'(g)}{G(f)})\leq0.$
Thus
\begin{align}\label{a2}\min\{v_\p(\frac{P'(f)}{G(f)}),v_\p(\frac{Q'(g)}{G(f)})\}\le 0.\end{align}
Combining (\ref{a1}) and (\ref{a2}) gives
\begin{align*}\min\{v_\p(\frac{P'(f)}{G(f)}),v_\p(\frac{Q'(g)}{G(f)})\}\le 0\end{align*}
for all points $\p\in C$ such that  $v_\p(f)\geq0$.
Together with  (\ref{h}), and the facts $n-1=\displaystyle\sum_{i=1}^l p_i,$ and $d= \displaystyle\sum_{i=l_1+1}^l p_i,$ we have
$$\displaystyle\sum_{i\in B_0}p_ih(f) \leq h({P'(f)},{Q'(g)}).$$
The above inequality and  inequality (\ref{bdtCT})  in Lemma~\ref{lmCT}~(ii) about the upper bound  for $h(P'(f),Q'(g))$
 give
$$(\displaystyle\sum_{i\in B_0}p_i-\frac{m+n}{m})h(f)\leq2\gen-2,$$
which is  the assertion (b).

For (c), let $$G_1(g):=\displaystyle\prod_{i\notin B_1}{(g-\beta_i)}^{q_i}=\displaystyle\prod_{i=h_1+1}^h{(g-\beta_i)}^{q_i},$$
and $d:=\deg G_1=\displaystyle\sum_{i=h_1+1}^h q_i.$
Similar to (b), we have
\begin{align}\label{hQ}h(P'(f),Q'(g)) &= h\left(\frac{P'(f)}{G_1(g)},\frac{Q'(g)}{G_1(g)}\right)\cr
&=-\displaystyle\sum_{v_\p(g)<0}\min\{v_\p(\frac{P'(f)}{G_1(g)}),v_\p(\frac{Q'(g)}{G_1(g)})\}\cr
&\quad\quad-\displaystyle\sum_{v_\p(g)\geq0}\min\{v_\p(\frac{P'(f)}{G_1(g)}),v_\p(\frac{Q'(g)}{G_1(g)})\}\cr
&= (m-d-\frac{m}{n})h(g)-\displaystyle\sum_{v_\p(g)\geq0}\min\{v_\p(\frac{P'(f)}{G_1(g)}),v_\p(\frac{Q'(g)}{G_1(g)})\}.
\end{align}
We still have to  prove at a point $\p\in C$ satisfying  $v_\p(g)\geq0$ that
\begin{align}\label{min}\min\{v_\p(\frac{P'(f)}{G_1(g)}),v_\p(\frac{Q'(g)}{G_1(g)})\}\leq0.\end{align}
Indeed,
we have
$$\frac{Q'(g)}{G_1(g)}=\frac{\displaystyle\prod_{i=1}^{h}{(g-\beta_i)}^{q_i}}
{\displaystyle\prod_{i=h_1+1}^{h}{(g-\beta_i)}^{q_i}}=\displaystyle\prod_{i=1}^{h_l}{(g-\beta_i)}^{q_i}.$$
Hence, if $\frac{Q'(g)}{G_1(g)}(\p)\ne0$ then $v_\p(\frac{Q'(g)}{G_1(g)})=0$, and we are done. If $\frac{Q'(g)}{G_1(g)}(\p)=0$, then there exists a $1\le j\le h_1$ such that $g(\p)-\beta_j=0$. By definition of the set $B_1$, in this case, $f(\p)-\alpha_i\neq0$ for all $i\in \{1, 2, \ldots, l\}$. This means $v_\p({P'(f)})=v_\p({\displaystyle\prod_{i=1}^l(f-\alpha_i)^{p_i}})=0,$
which implies
$
v_\p(\frac{P'(f)}{G_1(g)})\leq0.$ Therefore
$$\min\{v_\p(\frac{P'(f)}{G_1(g)}),v_\p(\frac{Q'(g)}{G_1(g)})\}\leq0$$
for all points $\p\in C$ satisfying $v_\p(g)\geq0$.
The equalities (\ref{hQ}) and (\ref{min}) imply
$$(\displaystyle\sum_{i=1}^{h_1}q_i +1-\frac mn) h(g) \leq h({P'(f)},{Q'(g)}),$$
which combines with (\ref{bdtCT}) to give
$$(\displaystyle\sum_{i\in B_1}q_i  -\frac{2m}{n}) \ h(g)\leq 2\gen-2.$$
The assertion (c) is therefore proved.
\end{proof}

\section{Proof of Theorem \ref{th2}}

\noi {\bf Notation.} \quad Recall that we have set:
\begin{align*}&A_0:=\{(i, j)\mid 1\leq i\leq l, 1\leq j\leq h, \
P(\alpha_i)=Q(\beta_{j})\},\\
&A_1:=\{(i, j)\mid (i, j)\in A_0,\  p_i>q_j\},\\
&A_2:=\{(i, j)\mid (i, j)\in A_0 ,\
p_i<q_j\},\end{align*}
  and we put $l_0:=\#A_0$.

When the polynomial $P$ and $Q$ satisfy the hypothesis I, the following lemma will  bound the cardinality of $A_0$.

\begin{lemma}[see \cite{AE}]\label{lm8} Let $P(X)$ and $Q(X)$  satisfy
Hypothesis I. Then for each $i$,
  $1\leq i\leq l$,   there exists at most one $j$, $1\leq j\leq h$,  such
that $P(\alpha_i)=Q(\beta_j)$.
  Moreover, $l_0\leq \min\{l, h\}.$\end{lemma}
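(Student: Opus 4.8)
\textbf{Proof proposal for Lemma~\ref{lm8}.}

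The plan is to exploit the defining property of Hypothesis~I, namely that $P$ is injective on the set $\{\alpha_1,\dots,\alpha_l\}$ of distinct roots of $P'$, and likewise $Q$ is injective on $\{\beta_1,\dots,\beta_h\}$. First I would fix an index $i$ with $1\le i\le l$ and suppose, for contradiction, that there were two distinct indices $j_1\ne j_2$ with $P(\alpha_i)=Q(\beta_{j_1})$ and $P(\alpha_i)=Q(\beta_{j_2})$. Then $Q(\beta_{j_1})=Q(\beta_{j_2})$, which contradicts the injectivity of $Q$ on the roots of $Q'$ (this is exactly Hypothesis~I for $Q$, since $j_1\ne j_2$). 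Hence for each $i$ there is at most one $j$ with $P(\alpha_i)=Q(\beta_j)$; this gives a well-defined partial map from $\{1,\dots,l\}$ to $\{1,\dots,h\}$ whose graph is precisely $A_0$.

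Next I would observe that this partial map is injective: if $i_1\ne i_2$ both mapped to the same $j$, then $P(\alpha_{i_1})=Q(\beta_j)=P(\alpha_{i_2})$, contradicting the injectivity of $P$ on the roots of $P'$. An injective partial function from $\{1,\dots,l\}$ to $\{1,\dots,h\}$ has domain of size at most $l$ and image of size at most $h$, and since $l_0=\#A_0$ equals the size of the domain (equivalently the image) of this partial bijection, we get $l_0\le l$ and $l_0\le h$, i.e. $l_0\le\min\{l,h\}$.

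Honestly there is no real obstacle here: the whole argument is a two-line application of the two injectivity statements packaged in Hypothesis~I, once one recognizes that $A_0$ is the graph of a partial injection between the index sets of the critical values. The only thing to be careful about is bookkeeping — making sure the "at most one $j$ per $i$" claim uses injectivity of $Q$ while the "at most one $i$ per $j$" claim (needed for the $l_0\le l$ bound via injectivity of the partial map, or directly for a symmetric statement) uses injectivity of $P$. Since the lemma is attributed to \cite{AE}, I would keep the proof to essentially these few sentences rather than belaboring it.
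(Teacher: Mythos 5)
Your proposal is correct and follows essentially the same route as the paper: the ``at most one $j$ per $i$'' claim via injectivity of $Q$ on the roots of $Q'$, the symmetric ``at most one $i$ per $j$'' claim via injectivity of $P$ on the roots of $P'$, and the count $l_0\le\min\{l,h\}$ from viewing $A_0$ as the graph of a partial injection. No gaps; your write-up is, if anything, slightly more explicit than the paper's about why the two injectivity statements together yield the cardinality bound.
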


\begin{proof} For each $i, (1\leq i\leq l)$, assume that there exist
$j_1, j_2,\   1\leq j_1, j_2\leq h$,  such that
$P(\alpha_i)=Q(\beta_{j_1})$ and $P(\alpha_i)=Q(\beta_{j_2})$. This
implies that $Q(\beta_{j_1})=Q(\beta_{j_2})$  and hence
$j_1=j_2$ because  $Q$ satisfies  Hypothesis I. Similarly, there
exists at most one $i, (1\leq i\leq l)$ such that
$P(\alpha_i)=Q(\beta_{j})$
  for each $j, (1\leq i\leq h)$. This ends the proof of  Lemma \ref{lm8}.
\end{proof}

By Lemma \ref{lm8},
without loss of generality we may assume from now that 
\begin{align*}&A_0=\{(1,j(1)), \ldots, (l_0,j(l_0))\};\\
&A_1=\{(1,j(1)), \ldots, (l_1,j(l_1))\}.
\end{align*}

\begin{proof}[\textbf{Proof of Theorem \ref{th2}.}] The idea to prove this theorem is similar to Theorem~\ref{th1} in that we have to find a polynomial of low degree which can cancel all the common zeros of $P'(f)$ and $Q'(g)$.
 We take
$$G_2:=\displaystyle\prod_{i=1}^{l_1}{(g-\beta_{j(i)})}^{q_{j(i)}}
\displaystyle\prod_{i=l_1+1}^{l_0}{(f-\alpha_i)}^{p_i}.$$
We have
\begin{align}\label{hPQ}h(P'(f),Q'(g)) &= h\Big(\frac{P'(f)}{G_2},\frac{Q'(g)}{G_2}\Big)\cr
&=-\displaystyle\sum_{v_\p(f)<0}\min\Big\{v_\p\Big(\frac{P'(f)}{G_2}\Big),v_\p\Big(\frac{Q'(g)}{G_2}\Big)\Big\}\cr
&\quad\quad-\displaystyle\sum_{v_\p(f)\geq0}\min\Big\{v_\p\Big(\frac{P'(f)}{G_2}\Big),v_\p\Big(\frac{Q'(g)}{G_2}\Big)\Big\}.
\end{align}
We first consider a point $\p\in C$ such that $v_\p(f)<0.$
We have $v_\p(P'(f))=(n-1)v_\p(f).$
Since $P(f)=Q(g),$ it follows that $v_\p(g)<0$ and $nv_\p(f)=mv_\p(g)$. We also have
$v_\p(Q'(g))=(m-1)v_\p(g)$. Hence,
$$v_\p(P'(f))<v_\p(Q'(g)),$$
and
\begin{align*}
v_\p(G_2)&=\Big(\displaystyle\sum_{i=1}^{l_1} q_{j(i)}\Big) \ v_\p(g)
+\Big(\displaystyle\sum_{i=l_1+1}^{l_0} p_i \Big)\ v_\p(f)\cr
&=\Big(\displaystyle\sum_{i=1}^{l_1} \frac{n}{m}q_{j(i)}
+\displaystyle\sum_{i=l_1+1}^{l_0}p_i\Big) \ v_\p(f).
\end{align*}
We remark that
$$n-1=\displaystyle\sum_{i=1}^lp_i=
\displaystyle\sum_{i=1}^{l_1}p_i
+\displaystyle\sum_{i=l_1+1}^{l_0}p_i
+\displaystyle\sum_{i=l_0+1}^lp_i.$$
Therefore
\begin{align}\label{7}
-&\displaystyle\sum_{v_\p(f)<0} \min\Big\{v_\p\Big(\frac{P'(f)}{G_2}\Big),v_\p\Big(\frac{Q'(g)}{G_2}\Big)\Big\}\cr
&=-\displaystyle\sum_{v_\p(f)<0}(\min\{v_\p(P'(f)),v_\p(Q'(g))\}-v_\p(G_2))\cr
&=-\displaystyle\sum_{v_\p(f)<0}\Big((n-1)
-(\displaystyle\sum_{i=1}^{l_1} \frac{n}{m}q_{j(i)}
+\displaystyle\sum_{i=l_1+1}^{l_0}p_i) \Big)v_\p(f)\cr
&=\Big((\displaystyle\sum_{i=1}^{l_1} p_i-\frac{n}{m}q_{j(i)})+
(\displaystyle\sum_{i=l_0+1}^{l} p_i)\Big) \ h(f).
\end{align}
Now we consider $\p\in C$ such that $v_\p(f)\geq0,$ and we will prove
$$\displaystyle\sum_{v_\p(f)\geq0}\min\Big\{v_\p\Big(\frac{P'(f)}{G_2}\Big),v_\p\Big(\frac{Q'(g)}{G_2}\Big)\Big\}\leq0.$$
Indeed, we have
\begin{align*}
\frac{P'(f)}{G_2}&=\frac{\displaystyle\prod_{i=1}^l{(f-\alpha_i)}^{p_i}}
{\displaystyle\prod_{i=1}^{l_1}{(g-\beta_{j(i)})}^{q_{j(i)}}
\prod_{i=l_1+1}^{l_0}{(f-\alpha_i)}^{p_i}}=\frac{\displaystyle\prod_{i=1}^{l_1}{(f-\alpha_i)}^{p_i}
{\displaystyle\prod_{i=l_0+1}^l{(f-\alpha_i)}^{p_i}}}{\displaystyle\prod_{i=1}^{l_1}{(g-\beta_{j(i)})}^{q_{j(i)}}},
\end{align*}
and
\begin{align*}
\frac{Q'(g)}{G_2}&=\frac{\displaystyle\prod_{j=1}^h{(g-\beta_j)}^{q_j}}
{\displaystyle\prod_{i=1}^{l_1}{(g-\beta_{j(i)})}^{q_{j(i)}}
\displaystyle\prod_{i=l_1+1}^{l_0}{(f-\alpha_i)}^{p_i}}\cr
&=\frac{\displaystyle\prod_{j\notin\{j(1),\dots,j(l_1)\}}{(g-\beta_j)}^{q_j}
\displaystyle\prod_{i=l_1+1}^{l_0}{(g-\beta_{j(i)})^{q_{j(i)}}}}{\displaystyle\prod_{i=l_1+1}^{l_0}{(f-\alpha_i)}^{p_i}}
\end{align*}

If $\frac{P'(f)}{G_2}(\p)\ne0$ then $v_{\p}\Big(\frac{P'(f)}{G_2}\Big)=0.$ Hence
$$\min\Big\{v_\p\Big(\frac{P'(f)}{G_2}\Big),v_\p\Big(\frac{Q'(g)}{G_2}\Big)\Big\}\leq0.$$

If $\frac{P'(f)}{G_2}(\p)=0$ then, since we are considering a point $\p\in C$ satisfying $v_{\p}(g)\ge 0,$ i.e. $\p$ is not a pole of $g$,  $\frac{P'(f)}{G_2}(\p)=0$ only when  the numerator is equal to zero at $\p$, in which case
there exists an $i$ with either $i\in\{1,\dots, l_1\}$ or $i\in\{l_0+1,\dots, l\}$  such that $f(\p)-\alpha_i=0.$
Suppose  that $f(\p)-\alpha_i=0$ for some $i$ such that $i\in\{1,\dots, l_1\}$.  By definition of $A_1$, we have $g(\p)-\beta_{j(i)}=0,$ and by Lemma~\ref{lm8}, for each $i$ there exits at most one $j(i)$ such that $g(\p)-\beta_{j(i)}=0$.
Looking at the ratio $\frac{Q'(g)}{G_2}$, we see that the factor of the form $g-\beta_{j(i)}$ with $(i,j(i))\in A_1$ is canceled, which means
$\frac{Q'(g)}{G_2}(\p)\ne 0.$ So, we have $v_{\p}\big(\frac{Q'(g)}{G_2}\big)= 0$. 
 Suppose that  $f(\p)-\alpha_i=0$ for some $i$ such that $i\in\{l_0+1,\dots, l\}$.   By definition of the set $A_0$, $P(\alpha_i)\ne Q(\beta_j) $ for all $1\le j\le h.$ Therefore, $g(\p)-\beta_j\ne 0$ for any $j$, which means $\frac{Q'(g)}{G_2}(\p)\ne 0,$ i.e  $v_{\p}\big(\frac{Q'(g)}{G_2}\big)= 0$.
Hence, in either case, we always have $\min\Big\{v_\p\big(\frac{P'(f)}{G_2}\big),v_\p\big(\frac{Q'(g)}{G_2}\big)\Big\}\leq0.$

Therefore, for  all of $\p\in C$ satisfying $v_{\p}(g)\ge 0,$
\begin{align}\label{8}
\min\Big\{v_\p\big(\frac{P'(f)}{G_2}\big),v_\p\big(\frac{Q'(g)}{G_2}\big)\Big\}\leq0
\end{align}
holds.

Combining (\ref{hPQ}), (\ref{7}) and (\ref{8}) gives
$$\Big(\displaystyle\sum_{i=1}^{l_1}\big(p_i-\frac{n}{m}q_{j(i)}\big)+
\displaystyle\sum_{i=l_0+1}^{l}p_i\Big) \ h(f) \leq h({P'(f)},{Q'(g)}).$$
Together with
(\ref{bdtCT}),  we have
$$\Big((\displaystyle\sum_{i=1}^{l_1}p_i-\frac{n}{m}q_j)+
(\displaystyle\sum_{i=l_0+1}^{l}p_i)-\frac{m+n}{m}\Big) \ h(f)\leq
2\gen-2.$$
The theorem is proved.
\end{proof}

\section{Proof of Theorem \ref{th3}}
In the proof of Theorem \ref{th3}, we will need the following lemmas.
\begin{lemma}\label{lm10}  Suppose there  are non-constant
functions $f$ and $g$ in $\K$ such that $P(f)=Q(g)$.
If $v_\p(f-\alpha_i)>0$ and $v_\p(g-\beta_j)>0$
at a point $\p\in C$, then
$$(p_i+1)v_\p(f -\alpha_i)=(q_j+1)v_\p(g -\beta_j).$$
\end{lemma}

\begin{proof} If  $v_\p(f-\alpha_i)>0$ and
$v_\p(g-\beta_j)>0$, then $P(\alpha_i)=Q(\beta_j)$ since $P(f)=Q(g)$.
Since $\alpha_i, \beta_j$ are zeros of $P', Q'$, with the multiplicities
$p_i, q_j$ respectively, we have the following expansions of $P$ at $\alpha_i$
and $Q$ at $\beta_j$:
\begin{align*}&P(X)=P(\alpha_i)+b_{i, p_i+1}(X-\alpha_i)^{p_i+1}+ \ldots +b_{i, n}(X-\alpha_i)^{n},\cr
&Q(X)=Q(\beta_j)+c_{j, q_j+1}(X-\beta_j)^{q_j+1}+ \ldots +c_{j, m}(X-\beta_j)^{m}.\end{align*}
We have
\begin{align*}  0&=P(f)-Q(g)\cr
&=  [b_{i, p_i+1}(f-\alpha_i)^{p_i+1}+\{\text {Higher order terms in} \ (f -\alpha_i) \}]\cr
&\qquad \ -[c_{j, q_j+1}(g-\beta_j)^{q_j+1}+\{\text{Higher order terms in} \ (g-\beta_j) \}].
\end{align*}
Therefore
$$(p_i+1)v_\p(f -\alpha_i)=(q_j+1)v_\p(g -\beta_j).$$
\end{proof}

\begin{proof}[\textbf{Proof of Theorem \ref{th3}.}] By \cite[Lemma~6]{AE}, if  $P(X)$ and $Q(X)$ satisfy one of the conditions (A), (B), (C) or (D)
 then the curve $P(X)-Q(Y)$ either has a linear factor or it is irreducible of genus 0.  If the case (E) holds, then $n=m=2$ and $P(X)-Q(Y)=(X-\alpha_1)^2-(Y-\beta_1)^2+c$
for some constant $c$. Hence either the curve $P(X)-Q(Y)$ has a linear factor or it is irreducible of genus 0.
Therefore,  for all of these cases, there exist two non-constant rational functions $f$ and $g$ in
$\K$ such that $P(f)=Q(g)$.
 The necessary condition is proved.

We now turn to the proof of the sufficient condition of the theorem.
Suppose that $P(X)$ and $Q(X)$ satisfy none of the conditions (A), (B), (C), (D) or (E).
Assume that
$f$ and $g$ are two non-constant rational functions in $\K$ such that $P(f)=Q(g)$.
 When the polynomials $P$ and $Q$ satisfy Hypothesis I, by Lemma \ref{lm8},
without loss of generality we may assume that $A_0$ is of the form
$\{(1,j(1)), \ldots, (l_0,j(l_0))\}$ such that the $p_i$ are non-increasing, it means
$p_1\geq p_2\geq \ldots \geq p_{l_0}.$ 

By Theorem \ref{th2}, together with the hypothesis $\gen=0, n=m$,  the
right-hand sides of the inequalities in Theorem \ref{th2} are negative, therefore
\begin{align}\label{14}&(\sum_{(i,j(i))\in A_1}p_i-q_{j(i)})+(\sum_{i=l_0+1}^l p_i)-2<0, \end{align}
 \text{which also implies}
\begin{align}\label{14.1}(\sum_{(i,j)\in A_2}q_{j(i)}-p_i)+(\sum_{j\notin\{ {j(1)},\dots, {j(l_0)}\}}q_j)-2<0.
 \end{align}
From the inequality (\ref{14}) and (\ref{14.1}), we have 
\begin{align}\tag{${\cal H}$}  & p_i= 1 \text{ for  all }
i\ge l_0+1;\  q_j= 1 \text{ for  all } j\notin\{ {j(1)},\dots, {j(l_0)},\  |p_i-q_{j(i)}|\le 1  \text{ for  } i\le l_0;\\
&|l-h|\le 1 \text{  and  } l_0\ge \max\{l, h\}-1.\nonumber
\end{align}
We will consider the following cases.

\medskip
  \noindent{\it Case 1.}\quad $l_0=0.$
\smallskip

By the statement ($\mathcal{H}$), we have $\max\{l, h\}\le l_0 +1=1.$

If $l=0$, then $P(X)$ is of the form $uX+v$ with $uv\neq0$, i.e $n=1.$
Since $n=m$, it follows that $m=1$ and $Q(Y)$ is of the form $sY+t$ with $st\neq0.$
Therefore, $P(X)-Q(Y)$ has a linear factor. This is the exceptional case corresponding to
condition (A), which is excluded.

If $l=1$, then, by the hypothesis
$n=m$ and the condition ($\mathcal{H}$),  $h=l=1$ and $q_1=p_1=1$. This is the exceptional case (E).

\medskip
  \noindent{\it Case 2.}\quad $l_0=1.$
\smallskip

Then, by the statement ($\mathcal{H}$), we have $\max\{l, h\}\le l_0 +1=2.$

Suppose first that $l=1$. By the conditions $n=m$ and  ($\mathcal{H}$), the case  $p_1<q_{j(1)}$ cannot happen.
 We only have to consider the following possibilities.
If $p_1=q_{j(1)}$ then, since $n=m$ and the statement ($\mathcal{H}$), we have $h=1$. Since $l_0=1$ we have $P(\alpha_1)=Q(\beta_{j(1)})$, therefore
$P(X)-Q(Y)$ is of the form $u(X-\alpha_1)^{p_1+1}-v(Y-\beta_{j(1)})^{p_1+1}$
with $uv\neq0$. Therefore, $P(X)-Q(Y)$ has a linear factor. This is the exceptional case (A).
If $p_1>q_{j(1)}$ then $p_1=q_{j(1)}+1, h=2$ and $q_2=1$. 
This is the exceptional case (B).

Suppose that $l=2$. Then $p_2=1$ because of the condition $p_i\leq1$ for $i\geq l_0+1$
in the statement ($\mathcal{H}$). On the other hand, the case  $p_1>q_{j(1)}$ cannot hold. Therefore, we consider
the following possibilities.
If $p_1=q_{j(1)}$ then $h=2$ and $q_2=1.$  This is the exceptional case (C).
If $p_1<q_{j(1)}$ then $q_{j(1)}=p_1+1$ and $h=1$.  This is the exceptional case (B).

\medskip
  \noindent{\it Case 3.}\quad $l_0\geq2.$
\smallskip

For each $(i_1, j(i_1))$ and $(i_2, j(i_2))$ in $A_0$, we define $L_{i_1, i_2}(f, g)=L_{i_1,i_2}$
as follows
\begin{align}\label{16}
 L_{i_1,i_2} :=(g-\beta_{j(i_1)})-\frac{\beta_{j(i_1)}-\beta_{j(i_2)}}{\alpha_{i_1}-\alpha_{i_2}}(f-\alpha_{i_1}),
\end{align}
  which can also be expressed as
\begin{align}\label{17}
L_{i_1,i_2} :=(g-\beta_{j(i_2)})-\frac{\beta_{j(i_1)}-\beta_{j(i_2)}}{\alpha_{i_1}-\alpha_{i_2}}(f-\alpha_{i_2}).
\end{align}

Now we take
$$\displaystyle G := L_{1,2}^{p_1+p_2-2+{\sum_{i=l_0+1}^l}p_i}\prod_{i=3}^{l_0}(f-\alpha_i)^{p_i}.$$
We have
\begin{align*}h(P'(f),Q'(g)) &= h\Big(\frac{P'(f)}{G},\frac{Q'(g)}{G}\Big)\cr
&=-\displaystyle\sum_{v_\p(f)<0}\min\Big\{v_\p\Big(\frac{P'(f)}{G}\Big),v_\p\Big(\frac{Q'(g)}{G}\Big)\Big\}\cr
&\quad\quad-\displaystyle\sum_{v_\p(f)\geq0}\min\Big\{v_\p\Big(\frac{P'(f)}{G}\Big),v_\p\Big(\frac{Q'(g)}{G}\Big)\Big\}.
\end{align*}
We first consider a point $\p\in C$ such that $v_\p(f)<0.$
We have $v_\p(P'(f))=(n-1)v_\p(f).$
Since $P(f)=Q(g)$ and $n=m$ by hypothesis, we have $v_\p(f)=v_\p(g)=v_\p(f-\alpha_1)=v_\p(g-\beta_{j(1)})$
and $v_\p(g)<0$. Thus $v_\p(L_{1,2})\ge v_\p(f)$, therefore
\begin{align*}
v_\p(G)&\ge\Big(\displaystyle p_1+p_2-2+\sum_{i=l_0+1}^l p_i+\sum_{i=3}^{l_0}p_i\Big) \ v_\p(f)\cr
&\ge(n-1-2) \ v_\p(f)=(n-3) \ v_\p(f).
\end{align*}
Therefore, 
\begin{align}\label{19}
&-\displaystyle\sum_{v_\p(f)<0} \min  \Big\{v_\p\Big(\frac{P'(f)}{G}\Big),v_\p\Big(\frac{Q'(g)}{G}\Big)\Big\}\cr
&=-\displaystyle\sum_{v_\p(f)<0}\Big(\min\{v_\p(P'(f)),v_\p(Q'(g))\}-v_\p(G)\Big)\ge 2  h(f).
\end{align}
Together with  Lemma~\ref{lmCT}(ii) and $\gen=0$, we have
\begin{align*} 2  h(f)-\displaystyle\sum_{v_\p(f)\geq0}\min\Big\{v_\p\Big(\frac{P'(f)}{G}\Big),v_\p\Big(\frac{Q'(g)}{G}\Big)\Big\}&\le
h({P'(f)},{Q'(g)})\\
&\leq 2 h(f)-2 -
\displaystyle\sum_{\p\in C}  \min\{v_{\p}^0(d_\p f), v_{\p}^0(d_\p g)\}\\
&\leq 2 h(f)-2 -
\displaystyle\sum_{v_\p(f)\geq0}  \min\{v_{\p}^0(d_\p f), v_{\p}^0(d_\p g)\},
\end{align*}
since if $v_\p(f)<0$ then $v_{\p}^0(d_\p f)=v_{\p}^0(d_\p g)=0$. Therefore, if we can prove
\begin{align}\label{contract} \displaystyle\sum_{v_\p(f)\geq0}\min\Big\{v_\p\Big(\frac{P'(f)}{G}\Big),v_\p\Big(\frac{Q'(g)}{G}\Big)\Big\}&<
2+
\displaystyle\sum_{v_\p(f)\geq0}  \min\{v_{\p}^0(d_\p f), v_{\p}^0(d_\p g)\}
\end{align}
then we can get a contradiction. \\
Let $\p\in C$ such that $v_\p(f)\geq0.$ For our purpose, we only have to consider those $\p\in C$ such that $v_\p\Big(\frac{P'(f)}{G}\Big)>0$ and also $v_\p\Big(\frac{Q'(g)}{G}\Big)>0.$
Since $v_\p(f)\geq0$ and
\begin{align*}
\frac{P'(f)}{G}
&=\frac{\displaystyle (f-\alpha_1)^{p_1}(f-\alpha_2)^{p_2}\prod_{i=l_0+1}^l{(f-\alpha_i)}^{p_i}}
{\displaystyle L_{1,2}^{p_1+p_2-2+{\sum_{i=l_0+1}^l}p_i}},
\end{align*}
we have
$v_\p\Big(\frac{P'(f)}{G}\Big)>0$ when $(f-\alpha_i)(\p)=0$ for $i$ is one  index in the set $\{1,  2, l_0+1,\dots,l\}$. However, if $i\in\{ l_0+1,\dots,l\}$ then, by the definition of the set $A_0$, $(g-\beta_j)(\p)\ne 0$ for any $j=1,...,h$, which means 
$$v_\p\Big(\frac{Q'(g)}{G}\Big)=v_\p\Big(\frac{\displaystyle\prod_{j=1}^h{(g-\beta_j)}^{q_j}}
{\displaystyle L_{1,2}^{p_1+p_2-2+{\sum_{i=l_0+1}^l}p_i}\prod_{i=3}^{l_0}(f-\alpha_i)^{p_i}}\Big)\le 0.$$
Therefore, we only have to check at points $\p\in C$ such that
 $v_\p(f-\alpha_i)>0$ and $v_\p(g-\beta_{j(i)})>0$ for $i=1$ or 2. 

We first consider $i=1$.
 By Lemma \ref{lm10},
\begin{align}\label{20}
(p_1+1)v_\p(f-\alpha_1)=(q_{j(1)}+1)v_\p(g-\beta_{j(1)}).
\end{align}
If $p_1\geq q_{j(1)}$ then $v_\p(f-\alpha_1)\leq v_\p(g-\beta_{j(1)})$ and hence
$v_\p(L_{1,2})\ge \min\{ v_{\p} (f-\alpha_{1}), v_{\p}
(g-\beta_{j(1)}\}=v_\p(f-\alpha_1)$. Therefore
$$v_\p \Big(\frac{P'(f)}{G}\Big)\le -\Big(p_2-2+\sum_{i=l_0+1}^lp_i\Big)v_\p(f-\alpha_1),$$
which is not positive if  $p_2\geq 2$ or if we have both $p_2=1$ and $l_0+1\leq l.$ Hence
$$\min\Big\{v_\p(\frac{P'(f)}{G}),v_\p(\frac{Q'(g)}{G})\Big\}\leq0<2+
\displaystyle\sum_{v_\p(f)\geq0}  \min\{v_{\p}^0(d_\p f), v_{\p}^0(d_\p g)\},$$
which means the inequality~(\ref{contract}) holds, so we can get a contradiction for this case. 
If $p_1<q_{j(1)}$ then, by \dkh, $q_{j(1)}=p_1+1.$ Therefore,
from the equality (\ref{20}) we have
$$v_\p(f-\alpha_1)=\frac{p_1+2}{p_1+1} \ v_\p(g-\beta_{j(1)}).$$
Hence $v_\p(f-\alpha_1)> v_\p(g-\beta_{j(1)})\geq p_1+1,$ the last inequality follows from the fact that $p_1+1$ and $p_1+2$ are coprime.
 So $v_\p(L_{1,2})= v_\p(g-\beta_{j(1)}).$
Therefore
\begin{align*}
v_\p \Big(\frac{P'(f)}{G}\Big)
&\le\Big(\frac{p_1(p_1+2)}{p_1+1}-p_1-p_2+2-\sum_{i=l_0+1}^lp_i\Big)v_\p(g-\beta_{j(1)})\cr
&\leq v_\p(g-\beta_{j(1)})-\frac{1}{p_1+1}v_\p(g-\beta_{j(1)}),
\end{align*}
if either $p_2\geq 2$ or  $p_2=1$ and $l_0+1\leq l.$ 
Since $v_\p(g-\beta_{j(1)})\geq p_1+1$ , it follows that $v_\p \Big(\frac{P'(f)}{G}\Big)\leq v_\p(g-\beta_{j(1)})-1=v_{\p}(d_\p g)$. Therefore, 
\begin{align*}\min\Big\{v_\p\Big(\frac{P'(f)}{G}\Big),v_\p\Big(\frac{Q'(g)}{G}\Big)\Big\}&\le v_\p \Big(\frac{P'(f)}{G}\Big)\leq v_{\p}(d_\p g)=\displaystyle\sum_{v_\p(f)\geq0}  \min\{v_{\p}^0(d_\p f), v_{\p}^0(d_\p g)\},
\end{align*}
which shows that (\ref{contract}) holds.

Using the same arguments for the case $i=2$, we also have a contradiction if either $p_2\geq 2$ or  $p_2=1$ and $l_0+1\leq l.$  Thus, $f$ and $g$
must be constants.

The remaining case is when $p_2=1$ and $l_0=l.$ 

\medskip
\noi{\it Subcase 1.}\quad  $p_2=1$ and $l_0=l=2$.
\smallskip

Then $P'(X)$ is of the form $(X-\alpha_1)^{p_1}(X-\alpha_2)$ and $n=p_1+2$.
We have $h\geq l_0=2.$ Since $n=m$ and by \dkh, it follows that $Q'(Y)$ is only
one of the following forms
\begin{align*}
&(Y-\beta_{j(1)})^{p_1}(Y-\beta_{j(2)}), \ {\rm i.e} \quad h=2, q_{j(1)}=p_1, q_{j(2)}=1; \cr
&(Y-\beta_{j(1)})^{p_1-1}(Y-\beta_{j(2)})^2, \ {\rm i.e} \quad h=2, q_{j(1)}=p_1-1, q_{j(2)}=2; \cr
&(Y-\beta_{j(1)})^{p_1-1}(Y-\beta_{j(2)})(Y-\beta_3), \ {\rm i.e} \quad h=3, q_{j(1)}=p_1-1, q_{j(2)}=1,q_3=1.
\end{align*}
The first form corresponds to condition (C), which is excluded.
For two remaining cases, we take
$$G:=L_{1,2}(g-\beta_{j(1)})^{p_1-2}.$$
By an argument analogous to the previous, we get $f$ and $g$ are constants.

\medskip
\noi{\it Subcase 2.}\quad  $p_2=1$ and $l_0=l\geq 3$.
\smallskip

If $p_1=1$ and $l_0=l=3$ then, since $p_1\geq p_2\geq p_3$, it follows that
$P'(X)$ is of the form $(X-\alpha_1)(X-\alpha_2)(X-\alpha_3)$ and $n=4.$ 
Since $n=m$ and $l_0=l=3$, hence $h=3$ and $q_1=q_2=q_3=1$. Therefore, 
$Q'(Y)$ is of the form $(Y-\beta_{j(1)})(Y-\beta_{j(2)})(Y-\beta_{j(3)})$ and 
$P(\alpha_i)=Q(\beta_{j(i)}) \text{\;for any\;} i=1, 2, 3.$ This is the exceptional
case (D).

If $p_1=1$ and $l_0=l\geq 4$, then we take
$$G:=L_{1,2}L_{3,4}\prod_{i=4}^{l_0}(f-\alpha_i)^{p_i}.$$

If $p_1\geq 2$, then we take
$$G:=L_{1,2}^{p_1-1}L_{1,3}\prod_{i=4}^{l_0}(f-\alpha_i)^{p_i}.$$
For the above two cases,
by repeating arguments similarly to the Case 1, we also
get $f$ and $g$ are constants. Theorem \ref{th3} is therefore proved. 
\end{proof}

\end{document}